\newtheorem{thm}{Theorem}
\newtheorem{cor}[thm]{Corollary}
\theoremstyle{definition}
\newtheorem{exmp}[thm]{Example}
\newtheorem{rem}[thm]{Remark}          
\newtheorem*{ack}{Acknowledgments}      
\newtheorem{defn-thm}[thm]{Definition--Theorem}  
\newtheorem{defn-lem}[thm]{Definition--Lemma}  
\theoremstyle{remark}
\newcommand{\z}[0]{{\mathbb Z}}
\newcommand{\p}[0]{{\mathbb P}}
\newcommand{\ext}[0]{\operatorname{Ext}}    
\newcommand{\Hom}[0]{\operatorname{Hom}}
\def\loccoh#1.#2.#3.#4.{H^{#1}_{#2}(#3,#4)}
\DeclareMathAlphabet{\mathchanc}{OT1}{pzc}%
                                {m}{it}
\begin{document}
\bibliographystyle{amsalpha}


\title[Syzygy structure theorem for del Pezzo varieties]{A structure theorem for syzygies of del Pezzo varieties}
\author{Yeongrak Kim}
\email{yeongrak.kim@pusan.ac.kr}

\address{Department of Mathematics \& Institute of Mathematical Science,  Pusan National University, 2 Busandaehak-ro 63beon-gil, Geumjeong-gu, 46241 Busan, Korea}

\begin{abstract}
Using the Buchsbaum-Eisenbud structure theorem for a minimal free resolution of an arithmetically Gorenstein variety, we describe a structure theorem for the highest linear syzygies among quadrics defining a del Pezzo variety. Indeed, such syzygies can be represented as columns of a skew-symmetric matrix whose entries are wedge products of linear forms.
\end{abstract}

\subjclass[2020]{13D02, 14M05, 13C05}
\keywords{del Pezzo variety, Syzygy structure theorem, Arithmetically Gorenstein, Syzygy scheme}

\maketitle

\section{Introduction}\label{Sect:Introduction}
Let $X \subset \p^r$ be a smooth, non-degenerate projective variety of dimension $n$, codimension $e=r-n$, degree $d = [\mathcal O_X(1)]^n$ over an algebraicallay closed field $k=\overline{k}$ of characteristic $0$. We say that $X$ is a \emph{del Pezzo variety} if $\omega_X = \mathcal O_X (1-n)$. Such varieties were intensively studied from various viewpoints. For instance, a del Pezzo variety satisfies $d = e + 2$, so it belongs to the next case of varieties of minimal degree. In 1889, Castelnuovo intiated the study of quadrics defining $X$ so that $\dim_{k} (I_X)_2$ is at most $\binom{e+1}{2}$. In 1894, Fano noticed that a del Pezzo variety $X$ is defined by $\dim_{k} (I_X)_2 = \binom{e + 1}{2} - 1$ independent quadrics, so the homogeneous ideal of $X$ contains the next maximal number of quadrics that a variety. Both points of view tell us that del Pezzo varieties are the next simplest case to the varieties of minimal degrees, namely, hyperquadrics, rational normal scrolls, and the Veronese surface in the smooth case. It is also remarkable that del Pezzo varieties also appear as varieties having $(e-1)$-th linear syzygies among quadrics which is a part of the $K_{p,1}$-theorem of Green \cite[Theorem 3.c.1]{Gre84}. It is clear that del Pezzo varieties of dimension $1$ are elliptic normal curves $C \subset \p^r$ of degree $r+1$, so there are del Pezzo varieties of arbitrary degree $d \ge 3$. On the other hand, there are only finitely many families of del Pezzo varieties when $n \ge 2$, see the classification in \cite{Fuj90}. In particular, the degree of a del Pezzo variety satisfies $3 \le d \le 9$, and even the dimension of del Pezzo varieties is at most $6$ when $d \ge 5$. 

It is well-known that a del Pezzo variety is arithmetically Gorenstein, that is, the minimal free resolution of the homogeneous coordinate ring $S_X$ of $X$ is isomorphic to its dual resolution. In particular, the graded Betti numbers of $S_X$ are symmetric. We are particularly interested in linear syzygies of del Pezzo varieties. Using the Koszul cohomology groups, we may interpret a $p$-th syzygy among the quadrics defining $X$ as a cycle in $K_{p,1} (S_X, S_1)$ which is the cohomology group of the Koszul complex
\[
[\wedge^{p+1} S_1 \otimes (S_X)_{0} \to \wedge^p S_1 \otimes (S_X)_{1} \to \wedge^{p-1} S_1 \otimes (S_X)_{2}].
\]
When $X \subset \p^r$ is a non-degenerate projective variety so that the ideal $I_X$ of $X$ does not contain any linear form, then we have $K_{p,1} (S_X, S_1) \simeq K_{p-1,2} (I_X, S_1)$ for every $p \ge 1$, so we will treat them as identical objects throughout the paper.

The goal of this paper is to establish a structure theorem for the highest syzygies of a del Pezzo variety so that they can be expressed as a skew-symmetric matrix by choosing suitable bases, see Theorem \ref{thm:main}. The key idea is to express linear (and higher linear) syzygies among the generators of minimal degrees as the wedge product of matrices representing linear syzygies. Note that the case $e=3$ is particularly well-understood as the famous structure theorem of Buchsbaum and Eisenbud \cite{BE77}, and the case $e=4$ is also easily deduced from \cite{Rei15}. As an application, we show that the ideal defining a quadric syzygy scheme of a highest linear syzygy of a del Pezzo variety $X$ is strictly contained in the ideal $I_X$. 

\begin{ack}
The author thanks Frank-Olaf Schreyer for helpful discussion, and also thanks the anonymous referee for useful comments. This work was supported by a $2$-Year Research Grant of Pusan National University.
\end{ack}

\section{Gorenstein resolution and Syzygies}
%
 
We briefly recall the symmetrizer trick of Buchsbaum and Eisenbud \cite{BE77}. Let $X \subset \p^r$ be an arithmetically Gorenstein variety of dimension $n$ and codimension $e=r-n$, that is, the homogeneous coordinate ring $S_X = S/I_X$ has a dualizing module $\omega_X$ satisfying 
\[
\omega_X = \ext_S^{e} (S_X, S(-r-1)) \simeq S_X (t)
\]
for some integer twist $t \in \z$, where $S = k[x_0, \cdots, x_r]$ is the standard graded polynomial ring which is the homogeneous coordinate ring of $\p^r$, and $I_X \subset S$ is the ideal of $X$. Let $F_{\bullet}$ be a minimal free resolution of $S_X$. By passing to the total complex of $F_{\bullet} \otimes F_{\bullet}$, the symmetric square $Sym^2 (F_{\bullet})$ is a complex of graded free $S$-modules such that
\[
Sym^2 (F_{\bullet})_n = \left\{ \begin{array}{ll}
\bigoplus_{i+j=n, i<j} F_i \otimes F_j & \text{ if $n$ is odd,} \\
{ \left( \bigoplus_{i+j=n, i<j} F_i \otimes F_j  \right) } \oplus \bigwedge^2 (F_{n/2}) & \text{ if $n \equiv 2 \pmod 4$,} \\
{ \left( \bigoplus_{i+j=n, i<j} F_i \otimes F_j \right) } \oplus Sym^2 (F_{n/2}) & \text{ if $n \equiv 0 \pmod 4$.}
\end{array} \right.
\]
By considering the twisted dual resolution $F_{\bullet}^{\vee} = \Hom_S (F_{\bullet}, S(-r-t-1))$, we may lift the identity map $F_{0}^{\vee} \stackrel{\simeq} \longrightarrow F_{0}$ so that we have isomorphisms $F_i \simeq F_{e-i}^{\vee}$ for every $0 \le i \le e$. Indeed, we have a self-dual resolution $F_{\bullet} \simeq F_{\bullet}^{\vee}$ induced by the symmetrizer $Sym^2 (F_{\bullet}) \to F_{e} \simeq S(-r-t-1)$.

When $e = 2m+1$ is odd, then this is the case of \cite[Theorem 1.5, 4.2]{Ste22}. When $e=2m$ is even, then there is a non-degenerate symmetric (if $m$ is even), or a skew-symmetric (if $m$ is odd) bilinear form $\varphi : F_{m} \otimes F_{m} \to F_{e}$ compatible with the matrix $F_{\bullet}$, so that $d_i : F_{i} \to F_{i-1}$ is dual to $d_{e-i+1} : F_{e-i+1} \to F_{e-i}$ under $\varphi$, see for instance \cite[Section 2.1]{Rei15}.

Note that this (skew-)symmetricity is compatible with syzygies. Using the notion of Koszul cohomology groups, $p$-th syzygies of equations defining $X$ are encoded in $K_{p,q} (S_X, S_1)$. When $e=2m$ is even, then we have the multiplication map on $m$-th syzygies
\[
K_{m,q} (S_X, S_1) \otimes K_{m,r+t+1-q} (S_X, S_1) \to K_{2m, r+t+1} (S_X, S_1) \simeq k.
\]
For Koszul cycles $\alpha \in K_{m,q} (S_X, S_1)$ and $\beta \in K_{m,r+t+1-q} (S_X, S_1)$, we take their representatives in $\wedge^m S_1 \otimes S_{q}$ and $\wedge^m S_1 \otimes S_{r+t+1-q}$ and multiply them in a natural way. As a result, we obtain an element in $\wedge^{2m} S_1 \otimes S_{r+t+1}$ whose image under the Koszul differential map is zero, and hence, $\alpha \beta$ defines a Koszul cycle in $K_{2m, r+t+1} (S_X, S_1)$. Furthermore, we have $\alpha \beta = (-1)^{m^2} \beta \alpha$ which implies that the above multiplication map explains a $\pm$-symmetric perfect pairing $F_{m} \otimes F_m \to F_{2m} = S (-r-t-1)$ \cite[Section 1]{BE77}, which is symmetric when $e \equiv 0 \pmod 4$, and is skew-symmetric when $e \equiv 2 \pmod 4$.

For instance, let $S=k[t_1, \cdots, t_{2m}]$ be a standard graded polynomial ring, and consider the Koszul resolution $K_{\bullet}$ of $t_1, \cdots, t_{2m}$ which is the minimal free resolution of $k = S/(t_1, \cdots, t_{2m})$. The Koszul cohomology group $K_{m,0} (k,S_1)$ is a $k$-vector space of dimension $\binom{2m}{m}$ generated by the cycles
\[
[ t_{i_1} \wedge \cdots \wedge t_{i_m} \otimes 1]
\]
where $1 \le i_1 < \cdots < i_m \le 2m$. It is clear that the multiplication map $K_{m,0} (k, S_1) \otimes K_{m,0} (k, S_1) \to K_{2m,0} (k, S_1)$ is either symmetric (when $m$ is even) or skew-symmetric (when $m$ is odd), for instance,
\[
[t_1 \wedge \cdots \wedge t_m \otimes 1] \cdot [t_{m+1} \wedge \cdots \wedge t_{2m} \otimes 1] = [t_1 \wedge \cdots \wedge t_{2m} \otimes 1]
\]
whereas
\[
[t_{m+1} \wedge \cdots \wedge t_{2m} \otimes 1] \cdot [t_1 \wedge \cdots \wedge t_m \otimes 1] = (-1)^{m^2} [t_1 \wedge \cdots \wedge t_{2m} \otimes 1].
\]
By rearranging these basis vectors if necessary, we may write it as a non-degenerate bilinear form $\varphi : K_{m,0} (k, S_1) \otimes K_{m,0} (k, S_1) \to K_{2m,0} (k, S_1)$, and we may represent $\varphi$ as a block matrix $\varphi= \left( \begin{array}{c|c} 0 & I \\ \hline I & 0 \end{array} \right)$ when $m$ is even, and $\varphi= \left( \begin{array}{c|c} 0 & I \\ \hline -I & 0 \end{array}\right) $ when $m$ is odd.

We first discuss an expression of linear syzygies using the exterior product. Let $X \subset \p^r$ be a non-degenerate projective variety, and let $I_X$ be its homogeneous ideal generated by forms of degree $\ge d$ so that $(I_X)_q = 0$ for $q < d$. We take a linear strand consisting of linear and higher linear syzygies of equations in $(I_X)_d$ from the minimal free resolution of $X$, namely, a subcomplex
\[
0 \to S(-d-s)^{\beta_{s+1,d-1}} \stackrel{d_{s+1}} \to \cdots \stackrel{d_3} \to S(-d-1)^{\beta_{2,d-1}}  \stackrel{d_2} \to S(-d)^{\beta_{1,d-1}} \stackrel{d_1} \to S \to 0
\]
consisted of equations of degree $d$ in $I_X$ and their (higher) linear syzygies. Since $K_{p,q} (S, S_1) = 0$ for all $p>0$, it is clear that $K_{p,q} (S_X, S_1) \simeq K_{p-1, q+1} (I_X, S_1)$ for every $p \ge 1$ and $q < d$. In particular, the graded Betti number $\beta_{p, d-1}$ equals to $\dim_k K_{p,d-1} (S_X, S_1) = \dim_k K_{p-1, d} (I_X, S_1)$ in the above subcomplex.

We are interested in linear syzygies among degree $d$ equations of $X$ presented by the matrices $d_p$'s. We consider the composition of two consecutive syzygy matrices in a forceful way, indeed, each entry of the composed matrix is computed as the sum of tensors
\[
\begin{pmatrix} a_1 & \cdots & a_s \end{pmatrix}  \begin{pmatrix} b_1 \\ \vdots \\ b_s \end{pmatrix} = \sum_{i=1}^s (a_i \otimes b_i), \quad a_i, b_j \in S_1.
\]
Since the ``symmetric composition'' of two consecutive linear syzygy matrices $d_i d_{i+1} = 0$ for every $i$, we may regard each entry as an alternating tensor $\sum_{i=1}^s ( a_i \wedge b_i )$. By repeating the process, the wedge product $D_p = d_2 \wedge d_3 \wedge \cdots \wedge d_p$ where $2 \le p \le s+1$ can be regarded as the ``exterior composition'' of linear syzygy matrices up to degree $p$. In particular, $D_p$ is a $(\beta_{1,d-1} \times \beta_{s+1,d-1})$ matrix with entries in $\wedge^{p-1} S_1$. If we choose a set of degree $d$ generators $\{f_1, \cdots, f_{\beta_{1,d-1}}\}$ in $(I_X)_d$ consisting $d_1$, then each column of $D_p$ defines a $p$-th linear syzygy of the form $\gamma = \sum_{i=1}^{\beta_{1,d-1}} (\ell_{J_i} \otimes f_i) \in \wedge^{p-1} S_1 \otimes (I_X)_d$. It is clear that the Koszul differential $\delta : \wedge^{p-1} S_1 \otimes (I_X)_d \to \wedge^{p-2} S_1 \otimes (I_X)_{d+1}$ maps $\gamma$ to $0$, and hence $\gamma \in K_{p-1,d} (I_X, S_1) = \ker [ \wedge^{p-1} S_1 \otimes (I_X)_d \to \wedge^{p-2} S_1 \otimes (I_X)_{d+1}]$ corresponds to a cycle in $K_{p-1,d} (I_X, S_1) \simeq K_{p,d-1} (S_X, S_1)$. Indeed, the space of $p$-th linear syzygies among degree $d$ equations of $X$ coincides with the column space of $D_p$. Note that such a wedge product of differentials in a complex can be used to define the Atiyah class, see \cite[Section 1.2]{ALJ89}. It can be also helpful in explicit computations of resultants and Cayley-Chow forms via B{\'e}zout or Steifel expression, see \cite[Chapter 12, 13]{GKZ94}, \cite{ES03} and references therein.

\begin{exmp}
\begin{enumerate}[(1)]
\item Let $X \subset \p^4$ be the rational normal curve defined as the $2$-minors of the $1$-generic matrix $M = \begin{pmatrix} x_0 & x_1 & x_2 & x_3 \\ x_1 & x_2&x_3 & x_4 \end{pmatrix}$. The ideal of $X$ is generated by $6$ quadrics, namely, 
\[
\left\{
\begin{array}{rcl}
Q_0 & = & x_1^2 - x_0 x_2 \\
Q_1 & = & x_1 x_2 - x_0 x_3 \\
Q_2 & = & x_2^2 - x_1 x_3 \\
Q_3 & = & x_1 x_3 - x_0 x_4 \\
Q_4 & = & x_2 x_3 - x_1 x_4 \\
Q_5 & = & x_3^2 - x_2 x_4.
\end{array}
\right.
\]
The linear syzygies among $Q_0, \cdots, Q_5$ is given by matrices
\[
d_2 = \begin{pmatrix} -x_2 & x_3 & -x_3 & \cdot & x_4 & \cdot & \cdot & \cdot \\
x_1 & -x_2 & \cdot & -x_3 & \cdot & x_4 & x_4 & \cdot \\
-x_0 & x_1 & \cdot & \cdot & \cdot & -x_3 & \cdot & x_4 \\
\cdot & \cdot & x_1 & x_2 & -x_2 & -x_3 & -x_3 & \cdot \\
\cdot & \cdot & -x_0 & \cdot & x_1 & x_2 & \cdot & -x_3 \\
\cdot & \cdot & \cdot & -x_0 & \cdot & \cdot & x_1 & x_2 
\end{pmatrix}, 
d_3 = \begin{pmatrix} x_3 & -x_4 & \cdot \\
\cdot & x_3 & -x_4 \\
-x_2 & x_3 & \cdot \\
x_1 & -x_2 & \cdot \\
\cdot & -x_2 & x_3 \\
-x_0 & x_1 & \cdot \\
x_0 & \cdot & -x_2 \\
\cdot & -x_0 & x_1
\end{pmatrix}.
\]
In particular, the matrix $D_3 = d_2 \wedge d_3$ representing $3$rd linear syzygies (in the sense that it presents $K_{3,1} (S_X, S_1)$) among $Q_0, \cdots, Q_5$ is given by

\[
D_3 = 2 \begin{pmatrix} - x_2 \wedge x_3 &  x_2 \wedge x_4 & - x_3 \wedge x_4 \\
 x_1 \wedge x_3 & - (x_2 \wedge x_3 + x_1 \wedge x_4 ) &  x_2 \wedge x_4 \\
- x_0 \wedge x_3 &  (x_1 \wedge x_3 + x_0 \wedge x_4) & - x_1 \wedge x_4 \\
- x_1 \wedge x_2 &  x_1 \wedge x_3 & - x_2 \wedge x_3 \\
 x_0 \wedge x_2 & -(x_1 \wedge x_2 + x_0 \wedge x_3 ) &  x_1 \wedge x_3 \\
- x_0 \wedge x_1 &  x_0 \wedge x_2 & - x_1 \wedge x_2 \end{pmatrix}.
\]
For instance, the $1$st column of $D_3$ defines an element in $\wedge^2 S_1 \otimes (I_X)_2$ by tensor product with each of $Q_0, \cdots, Q_5$, namely, 
\[
\gamma = -x_2 \wedge x_3 \otimes Q_0 + x_1 \wedge x_3 \otimes Q_1 - x_0 \wedge x_3 \otimes Q_2 - x_1 \wedge x_2 \otimes Q_3 + x_0 \wedge x_2 \otimes Q_4 - x_0 \wedge x_1 \otimes Q_5 
\]
by clearing the common coefficient $2$. It is clear that the Koszul boundary map $\delta$ maps $\gamma$ to
\[
\delta(\gamma) = -x_3 \otimes x_2 Q_0 + x_2 \otimes x_3 Q_0 + \cdots - x_1 \otimes x_0 Q_5 + x_0 \otimes x_1 Q_5 = 0,
\]
hence, it defines a nonzero Koszul cycle in $K_{2,2} (I_X, S_1) \simeq K_{3,1} (S_X, S_1)$.

\item Let $Y \subset \p^4$ be the isomorphic projection of the Veronese surface $v_2 (\p^2) \subset \p^5$, defined as the $2$-minors of the symmetric matrix $M = \begin{pmatrix} x_0 & x_1 & x_2 \\ x_1 & x_3 & x_4 \\ x_2 & x_4 & x_5 \end{pmatrix}$, at the point $[1:0:0:1:0:1]$. The ideal of $Y$ is minimally generated by $7$ cubics
\[
\left\{
\begin{array}{rcl}
f_0 & = & y_0^3 - y_0 y_3^2 + y_0 y_2 y_4 + y_1 y_3 y_4 - y_0 y_4^2 \\
f_1 & = & y_0^2 y_1 - y_1 y_3^2 + y_0 y_3 y_4 \\
f_2 & = & y_0 y_1^2 + y_1 y_2 y_3 - y_0 y_3^2 \\
f_3 & = & y_1 ^3 - y_1 y_2^2 + y_0 y_2 y_3 - y_1 y_3^2 + y_1 y_2 y_4 \\
f_4 & = & y_0^2 y_2 - y_2 y_3^2 - y_1^2 y_4 + y_2^2 y_4 + y_3^2 y_4 - y_2 y_4^2 \\
f_5 & = & y_0 y_1 y_2 + y_1^2 y_3 - y_3^3 + y_2 y_3 y_4 \\
f_6 & = & y_0^2 y_3 - y_3^3 + y_0 y_1 y_4 + y_2 y_3 y_4.
\end{array}
\right.
\]

The minimal free resolution of $Y$ is consisted of linear syzygies of $f_0, \cdots, f_6$, namely, 
\[
0 \to S(-6) \stackrel{d_4} \to S(-5)^{\oplus 5} \stackrel{d_3} \to S(-4)^{\oplus 10} \stackrel{d_2} \to S(-3)^{\oplus 7} \stackrel{d_1} \to S \to 0
\]
where $d_1 = \begin{pmatrix} f_0 & \cdots & f_6 \end{pmatrix}$. For instance, a vector $\begin{pmatrix} 0 \\ y_1 \wedge y_2 \\ -y_0 \wedge y_2 + y_1 \wedge y_3 \\ -y_0 \wedge y_3 \\ 0 \\ y_0 \wedge y_1 - y_2 \wedge y_3 \\ y_2 \wedge y_3 \end{pmatrix}$ lies in the column space of $D_3 = d_2 \wedge d_3$, which corresponds to a syzygy $(y_1 \wedge y_2) \otimes f_1 + (-y_0 \wedge y_2 + y_1 \wedge y_3) \otimes f_2 - (y_0 \wedge y_3) \otimes f_3 + (y_0 \wedge y_1 - y_2 \wedge y_3 ) \otimes f_5 + (y_2 \wedge y_3) \otimes f_6$ in $K_{2,3} (I_X, S_1) \simeq K_{3,2} (S_X, S_1)$. 

Similarly, one can check that the matrix $D_4 = d_2 \wedge d_3 \wedge d_4$ is a single columned matrix $$6 \begin{pmatrix} - y_1 \wedge y_2 \wedge y_3 \\  (y_0 \wedge y_2 \wedge y_3 - y_1 \wedge y_2 \wedge y_4) \\ (y_0 \wedge y_2 \wedge y_4 - y_1 \wedge y_3 \wedge y_4) \\  y_0 \wedge y_3 \wedge y_4 \\ - y_0 \wedge y_1 \wedge y_3 \\ (-y_0 \wedge y_1 \wedge y_4 + y_2 \wedge y_3 \wedge y_4) \\ (y_0 \wedge y_1 \wedge y_2 - y_2 \wedge y_3 \wedge y_4) \end{pmatrix}, $$ and the induced element by tensor product with each of $f_0, \cdots, f_6$ in $\wedge^3 S_1 \otimes (I_X)_3$ gives a nonzero Koszul cycle in $K_{3,3} (I_X, S_1) \simeq k$.

\end{enumerate}
\end{exmp}

We now turn our focus on linear syzygies of smooth del Pezzo varieties. Note that a smooth del Pezzo variety $X \subseteq \p^r$ of dimension $n$, codimension $e$ is an arithmetically Gorenstein variety so that $\omega_X \simeq S_X (1-n)$. In this case, the minimal free resolution $F_{\bullet}$ of $X$ is 
\[
F_{\bullet} : 0 \to S(-e-2) \stackrel{d_e} \to S(-e)^{\beta_{e-1,1}} \stackrel{d_{e-1}} \to \cdots \stackrel{d_2} \to S(-2)^{\beta_{1,1}} \stackrel{d_1} \to S \to S_X \to 0
\]
where $\beta_{p,1} = p \binom{e+1}{p+1} - \binom{e}{p-1}$ \cite[Theorem 1]{Hoa93}. 

%

%
We introduce the main theorem of this paper which describes the structure of the highest linear syzygies of del Pezzo varieties.
\begin{thm}\label{thm:main}
Let $e \ge 3$. By taking suitable bases, the matrix $D_{e-1}$ representing the $(e-1)$-th linear syzygies among quadrics defining a smooth del Pezzo variety $X \subset \p^r$ of codimension $e$ becomes skew-symmetric.
\end{thm}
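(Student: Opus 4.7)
The plan is to combine the self-duality of the Gorenstein minimal free resolution $F_\bullet$ of $X$ with a transpose-wedge identity for matrices with entries in $S_1$. Since $\omega_X \simeq S_X(1-n)$ gives $r+t+1 = e+2$, we have $F_i \simeq F_{e-i}^\vee$, so $\beta_{1,1} = \beta_{e-1,1}$ and $D_{e-1}$ is a square matrix. The chain-level isomorphism $\Phi = \{\phi_i \colon F_i \to F_{e-i}^\vee\}$ provides a common indexing of rows and columns; I normalize bases so that $\phi_0 = \phi_1 = I$, and the self-duality relation $\Phi^\vee = \varepsilon \Phi$ then forces $\phi_{e-1} = \varepsilon I$, where $\varepsilon \in \{\pm 1\}$ is the symmetry sign of the Gorenstein self-duality.

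The algebraic core is the identity $(A \wedge B)^T = -B^T \wedge A^T$ for matrices with linear entries, immediate from the antisymmetry $v \wedge w = -w \wedge v$ in $\wedge^2 S_1$. Iteration yields
\[
(A_1 \wedge A_2 \wedge \cdots \wedge A_m)^T = (-1)^{\binom{m}{2}} A_m^T \wedge \cdots \wedge A_1^T.
\]
Applied to $D_{e-1} = d_2 \wedge \cdots \wedge d_{e-1}$ and substituting $d_j^T = \phi_{e-j} \, d_{e-j+1} \, \phi_{e-j+1}^{-1}$ from the chain-map condition $\phi_{i-1} d_i = d_{e-i+1}^T \phi_i$, the inner matrices telescope via the identity $(AP) \wedge (P^{-1}B) = A \wedge B$, valid for any constant matrix $P$ since scalars may be pulled through wedge products. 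This leaves
\[
D_{e-1}^T = (-1)^{\binom{e-2}{2}} \, \phi_1 \, D_{e-1} \, \phi_{e-1}^{-1} = (-1)^{\binom{e-2}{2}} \, \varepsilon \, D_{e-1}.
\]

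What remains is a parity check. Taking $\varepsilon = (-1)^{\lfloor e/2 \rfloor}$, the classical Gorenstein symmetry sign (equivalent to the $(-1)^m$-symmetry of the central pairing $F_m \otimes F_m \to F_e$ when $e = 2m$, and the natural odd analogue for $e = 2m+1$), the total sign becomes $(-1)^{\lfloor e/2 \rfloor + \binom{e-2}{2}}$. Writing $e = 2m$ or $e = 2m+1$, this exponent evaluates to $2m^2 - 4m + 3$ or $2m^2 - 2m + 1$ respectively; both are odd, so the sign is $-1$ and $D_{e-1}^T = -D_{e-1}$.

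The main obstacle is pinning down $\varepsilon$ compatibly with the normalization $\phi_0 = \phi_1 = I$, which requires tracing the symmetrizer $Sym^2(F_\bullet) \to F_e$ carefully. The case $e = 3$ (Buchsbaum-Eisenbud, $\varepsilon = -1$, $d_2$ directly skew-symmetric) and $e = 4$ (Reid, $\varepsilon = +1$) serve as baseline verifications; for general $e$ the value $\varepsilon = (-1)^{\lfloor e/2 \rfloor}$ must be justified by a careful sign analysis, in the odd-codimension case via \cite{Ste22} and the even case via \cite{Rei15}. Once this identification is fixed, the telescoping step and the parity arithmetic are routine.
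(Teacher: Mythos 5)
Your proposal is correct and follows essentially the same route as the paper: both arguments combine the reversal sign $(-1)^{\binom{e-2}{2}}$ for transposing a wedge product of linear-entry matrices with the central symmetry sign of the Gorenstein self-duality (your $\varepsilon=(-1)^{\lfloor e/2\rfloor}$, the paper's $d_{m+1}^{\vee}=(-1)^m d_{m+1}$ resp.\ $\varphi^{\vee}=(-1)^m\varphi$ via \cite{Ste22} and \cite{Rei15}), and check that the product of the two signs is always $-1$. Your telescoping via the chain isomorphism $\Phi$ is just a repackaging of the paper's explicit palindromic decomposition $D_{e-1}=d_2\wedge\cdots\wedge d_m\wedge(\text{center})\wedge d_m^{\vee}\wedge\cdots\wedge d_2^{\vee}$, and the one point you flag as needing care (pinning down $\varepsilon$) is exactly the point the paper also delegates to those references.
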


\begin{proof}
Thanks to the above discussion on a free resolution of arithmetically Gorenstein varieties, by taking suitable bases we may assume that
\[
D_{e-1} = d_2 \wedge d_3 \wedge \cdots \wedge d_m \wedge d_{m+1} \wedge d_m^{\vee} \wedge d_{m-1}^{\vee} \wedge \cdots \wedge d_2^{\vee}
\]
where $d_{m+1}^{\vee} =  (-1)^m d_{m+1}$ in the case $e=2m+1$ is odd, and
\[
D_{e-1} = d_2 \wedge d_3 \wedge \cdots \wedge d_m \wedge \varphi \wedge d_m^{\vee} \wedge \cdots \wedge d_2^{\vee}
\]
where $\varphi : F_m \otimes F_m \to F_e$ is the non-degenerate bilinear form which is either symmetric ($m$ is even) or skew-symmetric ($m$ is odd) in the case $e=2m$ is even. In both cases, the wedge product $D_{e-1}$ is either symmetric or skew-symmetric. 

Let us conclude that the latter one holds, that is, $D_{e-1}$ is always skew-symmetric. It is clear that the transpose $D_{e-1}^{\vee}$ is identical to
\[
d_2^{\vee \vee} \wedge d_3^{\vee \vee}  \wedge \cdots \wedge d_3^{\vee} \wedge d_2^{\vee} = d_2 \wedge d_3 \wedge \cdots \wedge d_3^{\vee} \wedge d_2^{\vee} 
\]
up to sign. Note that each entry of the matrix $D_e$ is a linear combination of wedge products of linear forms $\ell_1 \wedge \cdots \wedge \ell_{e-2}$, $\ell_i \in S_1$. The sign is determined from the comparison between two elements
\[
\ell_1 \wedge \ell_2 \wedge \cdots \wedge \ell_{e-2} \quad \text{ and } \ell_{e-2} \wedge \ell_{e-1} \wedge \cdots \wedge \ell_1.
\]
When $e=2m+1$ is odd, then these elements differ by $(-1)^{m-1}$, and hence $D_{e-1}^{\vee} = (-1)^{m+(m-1)} D_{e-1}$ since $d_{m+1}^{\vee} = (-1)^{m} d_{m+1}$. When $e=2m$ is even, then these elements also differ by $(-1)^{m-1}$, and hence $D_{e-1}^{\vee} = (-1)^{m+(m-1)} D_{e-1}$ since $\varphi^{\vee} = (-1)^m \varphi$. In any cases, we conclude that $D_{e-1}^{\vee} = - D_{e-1}$ as claimed. 
\end{proof}

Note that the above theorem also works for any arithmetically Gorenstein variety $X \subset \p^r$ of codimension $e \ge 3$ whose minimal free resolution $F_{\bullet}$ of $X$ is linear except both ends, that is, $F_i$ is generated in a single degree $(-d-i+1)$ for every $i=1, 2, \cdots, e-1$. In this manner, the case $e=3$ coincides with the famous Buchsbaum-Eisenbud structure theorem for arithmetically Gorenstein variety of codimension $3$ \cite[Theorem 2.1]{BE77}.

\begin{exmp} \label{exmp:skewSymmetricSyzygies}
\begin{enumerate}[(1)]
\item Let $X = \p^2 \times \p^2 \subset \p^8$ be the Segre fourfold, which is a del Pezzo variety of codimension $e=4$. Up to linear change of coordinates, $X$ is defined by $4 \times 4$ Pfaffians of the extrasymmetric matrix
\[
M = \left( \begin{array}{ccc|ccc}
\cdot & a & b & d & e & f \\
-a & \cdot & c& e & g & h \\
-b & -c & \cdot & f & h & i \\
\hline
-d & -e & -f &\cdot& -a & -b \\
-e & -g & -h & a & \cdot & -c \\
-f & -h & -i & b & c & \cdot
\end{array} \right)
\]
which is the case of the multiplier $\lambda = 1$, see \cite[Section 9]{BKR12} and \cite[Example 2.3]{Rei15}. Note that we intentionally follow the same notation as in \cite{Rei15} to avoid confusion. Among the fifteen Pfaffians, there are $9$ independent quadrics, and one can take the matrix $d_2 = \left( \begin{array}{cc} A&B \end{array} \right)$ of (linear) syzygies of $X$ as 

\[
\left( \begin{array}{rrrrrrrr|rrrrrrrr}
\cdot & -a & -b & -d & -e & -h & f & \cdot & i & \cdot&\cdot&\cdot&\cdot&\cdot&\cdot& c \\
a&\cdot&-c&-e&-g&\cdot&-h&f&\cdot&i&h&\cdot&\cdot&-b&-c&\cdot \\
b&c&\cdot&-f&-h&\cdot&\cdot&\cdot&\cdot&\cdot&i&\cdot&\cdot&\cdot&\cdot&\cdot \\
d&e&f&\cdot&a&c&-b&\cdot&\cdot&\cdot&\cdot&i&\cdot&\cdot&\cdot&-h \\
e&g&h&-a&\cdot&\cdot&c&-b&\cdot&\cdot&-c&\cdot&i&f&h&\cdot \\
\cdot&\cdot&\cdot&\cdot&\cdot&\cdot&-g&e&\cdot&h&\cdot&\cdot&c&-a&\cdot&\cdot \\
\cdot&\cdot&\cdot&\cdot&\cdot&g&\cdot&-d&-h&-f&e&-c&-b&\cdot&-a&\cdot \\
\cdot&\cdot&\cdot&\cdot&\cdot&-e&d&\cdot&f&\cdot&-d&b&\cdot&\cdot&\cdot&-a \\
\cdot&\cdot&\cdot&\cdot&\cdot&\cdot&\cdot&\cdot&-c&b&-a&-h&f&d&e&g
\end{array} \right)
\]
One can easily check that $d_3 = \left( \begin{array}{c} B^t \\ \hline A^t \end{array} \right)$ is a matrix of higher linear syzygies, and hence $D_3 = d_2 \wedge d_3$ is skew-symmetric.

\item Note that Theorem \ref{thm:main} is also applicable for a minimal free resolution of a Gorenstein ideal if it is linear except the both ends. As the simplest case of Gorenstein ideal of grade (= height) $e \ge 5$, we consider a complete intersection $I = (x_0, x_1, x_2, x_3, x_4)$ even though it does not appear as the ideal of a smooth non-degenerate projective variety in $\p^r$. Its syzygy matrix $d_2$ is consisted of Koszul relations
\[
d_2 = \begin{pmatrix} 
-x_4 & \cdot & \cdot & \cdot & -x_3 & \cdot & \cdot & -x_2 & \cdot & -x_1 \\
\cdot & -x_4 & \cdot & \cdot & \cdot & -x_3 & \cdot & \cdot & -x_2 & x_0 \\
\cdot & \cdot & -x_4 & \cdot &\cdot&\cdot&-x_3 & x_0 & x_1 & \cdot \\
\cdot & \cdot & \cdot & -x_4 & x_0 & x_1 & x_2 & \cdot &\cdot&\cdot \\
x_0 & x_1 & x_2 & x_3 & \cdot&\cdot&\cdot&\cdot&\cdot&\cdot
\end{pmatrix}.
\]
One can choose the matrix $d_3$ of syzygies of $d_2$ as a symmetric matrix, for instance, 
\[
d_3 = \begin{pmatrix} 
\cdot&\cdot&\cdot&\cdot&\cdot&-x_2&x_1&\cdot&x_3&\cdot \\
\cdot&\cdot&\cdot&\cdot&x_2&\cdot&-x_0&-x_3&\cdot&\cdot \\
\cdot&\cdot&\cdot&\cdot&-x_1&x_0&\cdot&\cdot&\cdot&x_3 \\
\cdot&\cdot&\cdot&\cdot&\cdot&\cdot&\cdot&x_1 & -x_0 & -x_2 \\
\cdot&x_2&-x_1&\cdot&\cdot&\cdot&\cdot&\cdot&-x_4&\cdot \\
-x_2&\cdot&x_0&\cdot&\cdot&\cdot&\cdot&x_4&\cdot&\cdot \\
x_1&-x_0&\cdot&\cdot&\cdot&\cdot&\cdot&\cdot&\cdot&-x_4 \\
\cdot&-x_3&\cdot&x_1&\cdot&x_4&\cdot&\cdot&\cdot&\cdot \\
x_3&\cdot&\cdot&-x_0&-x_4&\cdot&\cdot&\cdot&\cdot&\cdot \\
\cdot&\cdot&x_3&-x_2&\cdot&\cdot&-x_4&\cdot&\cdot&\cdot
\end{pmatrix}
\]
and the matrix $d_4$ of syzygies of $d_3$ as $d_4 = -d_2^{t}$. In particular, the matrix $D_4 = d_2 \wedge d_3 \wedge d_4$ representing $(e-1)$-th linear syzygies of $I$ is a skew-symmetric $5 \times 5$ matrix

\[
D_4 = 6 \begin{pmatrix}
\cdot & x_2 \wedge x_3 \wedge x_4 & -x_1 \wedge x_3 \wedge x_4 & x_1 \wedge x_2 \wedge x_4 & - x_1 \wedge x_2 \wedge x_3 \\
& \cdot & x_0 \wedge x_3 \wedge x_4 & -x_0 \wedge x_2 \wedge x_4 & x_0 \wedge x_2 \wedge x_3 \\
& & \cdot & x_0 \wedge x_1 \wedge x_4 & -x_0 \wedge x_1 \wedge x_3 \\
& & & \cdot & x_0 \wedge x_1 \wedge x_2 \\
& & & & \cdot
\end{pmatrix}.
\]
\end{enumerate}
\end{exmp}

\begin{rem}
Let $X \subset \p^r$ be a non-degenerate variety of codimension $e$. There are only a few possibilities that the quadrics defining $X$ admit $(e-1)$-th linear syzygies, in a connection with Green's $K_{p,1}$-theorem for the case $\beta_{e,1}(X) \neq 0$ \cite{Gre84, NP94, BS06, BS07}. Let us exclude the case when $X$ is a variety of minimal degree $\deg (X) = e+1$ which is equivalent to saying that $X$ has $e$-th linear syzygies. In this case, if we have a (skew-symmetric) $N \times N$ matrix $D_{e-1}$ with entries in $\wedge^{e-2} S_1$ where $N = \binom{e+1}{2} - 1$, and if $D_{e-1}$ is of full rank, then we have $\beta_{e-1,1}(X) \ge N = \binom{e+1}{2}-1$ which forces that $X$ is a del Pezzo variety \cite[Theorem 4.1, 4.3]{HK15}.  
\end{rem}

\section{Application to syzygy schemes}

One application of the above syzygy structure theorem is an ideal-theoretic description of quadric syzygy schemes. Roughly speaking, the \emph{(quadric) syzygy scheme} associated with a (higher) linear syzygy of $X$ is a scheme defined by quadrics that are minimally required to define the given syzygy. To be precise, the syzygy scheme of a Koszul cycle $\gamma \in K_{p,1}(S_X, S_1) \simeq K_{p-1,2} (I_X, S_1)$ is a quadratic scheme defined by an ideal $I(\gamma)$ which is generated by the quadrics $Q_1, \cdots, Q_s$ where $\gamma \in K_{p-1,2} (I_X, S_1)$ is minimally represented by 
\[
\gamma = \sum_{i=1}^s \ell_{J_i} \otimes Q_i \in \wedge^{p-1} S_1 \otimes (I_X)_2.
\]
This notion was first introduced by Green \cite{Gre82}, and then studied extensively by Schreyer and his students. In particular, thanks to the strong Castelnuovo lemma \cite[Theorem 3.c.6]{Gre84}, if $\deg (X) \ge e+3$ then the syzygy scheme of any nonzero Koszul cycle $\gamma \in K_{e-2,2}(I_X, S_1)$ must be a variety $Y$ of dimension $n+1$ and of minimal degree $\deg (Y) = e$. On the other hand, when $\deg (X) \le e+2$,  the syzygy scheme of $\gamma \in K_{e-2,2}(I_X, S_1)$ can differ from a variety of minimal degree. 

Using the above syzygy structure theorem, we immediately have the following ideal-theoretic description of a syzygy scheme of a highest (= $(e-1)$-th) linear syzygy of a smooth del Pezzo variety.
\begin{cor}
Let $X \subset \p^r$ be a smooth del Pezzo variety of codimension $e \ge 3$. The syzygy scheme $Syz(\gamma)$ of a nonzero Koszul cycle $\gamma \in K_{e-2,2}(I_X, S_1)$ is a scheme defined by $\binom{e}{2} \le m \le \binom{e+1}{2}-2$ quadrics in $I_X$. In particular, $Syz_{e-1} (X) = \bigcap_{\gamma \in K_{e-2,2}(I_X, S_1)} Syz(\gamma) = X$.
\end{cor}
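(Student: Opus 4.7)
The plan is to translate both the bounds on $m$ and the scheme equality $Syz_{e-1}(X)=X$ into statements about the skew-symmetric matrix $D_{e-1}$ provided by Theorem \ref{thm:main}. Fix bases $\{Q_1,\dots,Q_N\}$ of $(I_X)_2$ and $\{\gamma_1,\dots,\gamma_N\}$ of $K_{e-2,2}(I_X,S_1)$, where $N=\binom{e+1}{2}-1$, in which $D_{e-1}$ is skew-symmetric with entries in $\wedge^{e-2}S_1$. Any nonzero cycle $\gamma$ takes the form $\gamma=\sum_j c_j\gamma_j$ for some $\mathbf{c}\in k^N\setminus\{0\}$, and the coefficient of $Q_i$ in $\gamma$ is $(D_{e-1}\mathbf{c})_i$. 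The integer $m$ in the statement is precisely the tensor rank of $\gamma\in\wedge^{e-2}S_1\otimes(I_X)_2$, which in turn equals $\dim_k\operatorname{span}_k\{(D_{e-1}\mathbf{c})_i\}_{i=1}^N\subset\wedge^{e-2}S_1$.

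The upper bound $m\le\binom{e+1}{2}-2$ follows immediately from skew-symmetry: $\mathbf{c}^T D_{e-1}\mathbf{c}=0$ yields $\sum_i c_i(D_{e-1}\mathbf{c})_i=0$ in $\wedge^{e-2}S_1$, which is a nontrivial $k$-linear relation among the $N$ elements $(D_{e-1}\mathbf{c})_i$ because $\mathbf{c}\neq 0$, forcing the span to have dimension at most $N-1$. The scheme equality is just as direct: $X\subseteq Syz_{e-1}(X)$ is trivial since $I(\gamma)\subseteq I_X$ for every $\gamma$, and the reverse inclusion amounts to showing that each basis quadric $Q_i$ appears in $I(\gamma_j)$ for some $j$, i.e.\ that no row of $D_{e-1}$ vanishes identically. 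If row $i$ were zero, skew-symmetry would kill column $i$ as well, whence $\gamma_i=0$, contradicting that $\{\gamma_1,\dots,\gamma_N\}$ is a $k$-basis of the $N$-dimensional space $K_{e-2,2}(I_X,S_1)$.

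The main obstacle is the lower bound $m\ge\binom{e}{2}$, which does not follow from formal skew-symmetry alone. My plan is to exploit Fujita's classification: every smooth del Pezzo variety $X\subset\p^r$ of codimension $e$ sits inside an ambient variety $Y\subset\p^r$ of minimal degree $e$ and dimension $n+1$, whose ideal $I_Y\subset I_X$ is generated by exactly $\binom{e}{2}$ Castelnuovo quadrics. The Eagon--Northcott-type resolution of $Y$ then embeds into the linear strand of the minimal free resolution of $X$, giving a natural map $K_{e-2,2}(I_Y,S_1)\hookrightarrow K_{e-2,2}(I_X,S_1)$ whose image contains the top syzygies of $Y$; by the Eagon--Northcott structure these top syzygies involve every one of the $\binom{e}{2}$ generators of $I_Y$. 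One then argues that the $\binom{e}{2}\times N$ submatrix of $D_{e-1}$ indexed by the Castelnuovo quadrics has column rank $\binom{e}{2}$ on every nonzero $\mathbf{c}$, which forces $\dim_k\operatorname{span}_k\{(D_{e-1}\mathbf{c})_i\}\ge\binom{e}{2}$. I expect this transversality statement to be verifiable uniformly through a Koszul-cohomology comparison between $X$ and $Y$; failing that, one can appeal to Fujita's finite list of families when $n\ge 2$ and the classical description of elliptic normal curves when $n=1$.
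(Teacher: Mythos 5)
Your upper bound argument and the identification of $m$ with $\dim_k\operatorname{span}_k\{(D_{e-1}\mathbf{c})_i\}$ are correct and are essentially the paper's own proof: the paper conjugates by a change-of-basis matrix $P$ whose first row is $\mathbf{c}$ and reads off the vanishing of the $(1,1)$ entry of $PD_{e-1}P^t$, which is precisely your relation $\mathbf{c}^TD_{e-1}\mathbf{c}=0$. The genuine gap is the lower bound $m\ge\binom{e}{2}$, which you leave unexecuted: you propose to embed $X$ in an $(n+1)$-dimensional variety $Y$ of minimal degree and prove a ``transversality'' statement for the submatrix of $D_{e-1}$ indexed by the $\binom{e}{2}$ quadrics of $I_Y$, but you prove neither ingredient. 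The existence of such a $Y$ is itself not automatic here --- $\deg X=e+2$ is exactly the boundary case in which the strong Castelnuovo lemma fails to produce a minimal-degree syzygy scheme, as the paper points out --- and the full-rank claim for the Castelnuovo submatrix on every nonzero $\mathbf{c}$ is exactly the hard content, which you only ``expect'' to be verifiable. Moreover, no del Pezzo-specific geometry is needed: the bound is a general fact about nonzero classes in $K_{p,1}$ of a nondegenerate variety (syzygy rank at least $\binom{p+1}{2}$, applied with $p=e-1$), proved by Green's monodromy argument, and the paper simply cites the proof of \cite[Theorem 1.3]{Gre82}. So your route is both incomplete and harder than necessary.

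There is also a smaller flaw in the scheme equality. You assert that $Q_i\in I(\gamma_j)$ for some $j$ is equivalent to row $i$ of $D_{e-1}$ being nonzero. It is not: $D_{ij}\neq 0$ only says that $Q_i$ occurs with nonzero coefficient in one representation of $\gamma_j$, whereas $I(\gamma_j)$ is generated by the minimal subspace $V_j\subseteq(I_X)_2$ with $\gamma_j\in\wedge^{e-2}S_1\otimes V_j$; for instance $\gamma=w\otimes(Q_1+Q_2)$ has both coefficients nonzero yet $Q_1\notin I(\gamma)$. What is actually needed is $\sum_{\gamma}V_{\gamma}=(I_X)_2$, equivalently that no nonzero $k$-linear functional $\lambda$ on $(I_X)_2$ satisfies $\lambda^TD_{e-1}=0$; this does follow from skew-symmetry together with the $k$-linear independence of the columns of $D_{e-1}$ (the $\gamma_j$ form a basis of an $N$-dimensional space), so your ingredients suffice, but the step as written --- ruling out only identically vanishing rows --- does not establish it.
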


\begin{proof}
Note first that $\dim_k (I_X)_2 = \binom{e+1}{2}-1$. The lower bound $\binom{e}{2} \le m$ is due to Green using the monodromy argument, see the proof of \cite[Theorem 1.3]{Gre82}. Since $\gamma$ is a nonzero $(e-1)$-th linear syzygy among quadrics $Q_1, \cdots, Q_N$ in $I_X$ (here, $N = \binom{e+1}{2}-1$ is the number of independent quadrics in $I_X$), it corresponds to a certain linear combination $\gamma = c_1 \gamma_1 + \cdots + c_N \gamma_N$ of syzygies $\gamma_1, \cdots, \gamma_N$ associated with the columns of the skew-symmetric matrix $D_{e-1}$ representing $(e-1)$-th syzygies of $Q_1, \cdots, Q_N$. In the case, one can choose a change of coordinate matrix $P$ whose first row is $\begin{pmatrix} c_1 & \cdots & c_N \end{pmatrix}$ and take $D^{\prime} = PD_{e-1} P^t$ so that $D^{\prime}$ is a matrix representing $(e-1)$th syzygies among quadrics $Q_1^{\prime}, \cdots, Q_N^{\prime}$ where $\begin{pmatrix} Q_1^{\prime} & \cdots & Q_N^{\prime} \end{pmatrix} P = \begin{pmatrix} Q_1 & \cdots & Q_N \end{pmatrix}$. The first column of $D^{\prime}$ corresponds to the syzygy  
\[
 D_{11}^{\prime} \otimes Q_1^{\prime} + \cdots + D_{N1}^{\prime} \otimes Q_N^{\prime} = \gamma,
\]
and in this expression we have $D_{11}^{\prime} = 0$ from the skew-symmetricity of $D^{\prime}$. Indeed, we do not need all of the $N$ quadrics to represent the syzygy $\gamma$ but at most $N-1$ quadrics $Q_2^{\prime}, \cdots, Q_N^{\prime}$ are enough. We conclude that $m \le \dim_k (I_X)_2 - 1 = \binom{e+1}{2} - 2$. 

Conversely, we can similarly construct a nonzero syzygy $\gamma \in K_{p-1,2} (I_X, S_1)$ whose syzygy ideal $I(\gamma)$ contains every nonzero quadric $0 \neq Q \in (I_X)_2$ using a suitable change of coordinates. In particular, we have $X \subseteq Syz_{e-1} (X) \subseteq X$ that completes the proof.
\end{proof}


\begin{exmp}
\begin{enumerate}[(1)]
\item Consider a smooth del Pezzo variety of the case $e=3$ and $n=6$, which is the Grassmannian $Gr(2,5) \subset \p^9$ of lines in $\p^4$. We may regard that it is defined by the $4$-Pfaffians $q_0, \cdots, q_4$ of the skew-symmetric matrix
\[
D_2 = d_2 = \begin{pmatrix} 
\cdot & x_{01} & x_{02} & x_{03} & x_{04} \\
& \cdot & x_{12} & x_{13} & x_{14} \\
& & \cdot & x_{23} & x_{24} \\
& & & \cdot & x_{34} \\
& & & & \cdot
\end{pmatrix}.
\]
For instance, the last column of $D_2$ corresponds to a linear syzygy $\gamma_4 = x_{04} \otimes q_0 + x_{14} \otimes q_1 + x_{24} \otimes q_2 + x_{34} \otimes q_3$, and hence the syzygy scheme $Syz (\gamma_4)$ is the scheme defined by the ideal $(q_0, q_1, q_2, q_3)$. One can easily verify that $Syz(\gamma_4) = V(q_0, q_1, q_2, q_3) \cup V(x_{04}, x_{14}, x_{24}, x_{34})$ is the (scheme-theoretic) union of $Gr(2,5)$ and a linear subspace defined by $4$ linear forms appearing in $\gamma_4$. It is clear that if we take the intersection of sufficiently many linear syzygies in $K_{1,2}( I_X, S_1)$ among $q_0, \cdots, q_5$, for instance, $Syz(\gamma_1) \cap Syz (\gamma_2) \cap Syz (\gamma_3) \cap Syz (\gamma_4)$ coincides with $Gr(2,5)$ itself. We refer to \cite[Section 6]{vB07} for a more detailed explanation in terms of the generic syzygy scheme and Grassmann syzygies.

\item In some cases of del Pezzo varieties, possibly there is an $(e-1)$-th syzygy $\gamma$ that is defined by less than $\binom{e+1}{2}-2$ quadrics. For example, let $X \subset \p^5$ be a del Pezzo surface of degree $5$ defined by $4$ Pfaffians $Q_0, \cdots, Q_4$ of the skew-symmetric matrix
\[
d_2 = \begin{pmatrix} 
\cdot & -x_0 + x_1 & -x_1 & x_1 - x_5 & x_5 \\
& \cdot & -x_2 & -x_5 & x_5 \\
& & \cdot & x_2 & -x_3 \\
& & & \cdot & x_4 \\
& & & & \cdot
\end{pmatrix}.
\]
The last column of the matrix $\gamma$ corresponds to the linear syzygy
\[
\gamma =  x_5 \otimes (Q_0 + Q_1) - x_3 \otimes Q_2 + x_4 \otimes Q_3.
\]
Hence, the syzygy scheme $Syz(\gamma)$ is defined by only three quadrics, namely, 
\[
\begin{array}{rl}
Q_0 + Q_1 = &  -x_1 x_3 + x_1 x_4 - x_2 x_4 \\
Q_2 = & -x_0 x_4 + x_1 x_4 - x_1 x_5 \\
Q_3 = & -x_0 x_3 + x_1 x_3 - x_1 x_5 + x_2 x_5.
\end{array}
\]
It is easy to check that the ideal $(Q_0+Q_1, Q_2, Q_3)$ is equal to the ideal generated by $2$-minors of $\begin{pmatrix} x_0 - x_1 & -x_1 + x_2 & x_1 \\ x_5 & x_3 & -x_4 \end{pmatrix}$, and thus $Syz(\gamma)$ is a rational cubic threefold scroll containing $X$. A similar discussion was made in \cite[Section 1]{KS89} using the notion of a generalized zero of a skew-symmetric matrix. We also refer to \cite[Remark 5.5]{KMP24} for an idea to find a rational normal scroll containing $X$ via quadric syzygy schemes.

\item Let us consider again the Segre variety in Example \ref{exmp:skewSymmetricSyzygies}-(1). We choose the first column of the skew-symmetric matrix $D_3$ representing $K_{3,1} (S_X, S_1) \simeq K_{2,2} (I_X, S_1)$, namely,
\[
\gamma = 2 \begin{pmatrix} 0 \\ -c \wedge f + b \wedge h + a \wedge i \\ b \wedge i \\ d \wedge i \\ -b \wedge c-f \wedge h+e \wedge i \\ -c \wedge e+a \wedge h \\ c \wedge d+b \wedge e-a \wedge f \\ -b \wedge d \\ a \wedge b+e \wedge f-d \wedge h
\end{pmatrix}.
\]
Hence, the syzygy scheme $Syz(\gamma)$ is defined by $8$ quadrics by dropping the $1$st quadric ($ac-fg+eh$) among the $9$ quadrics defining $X$ (we do not want to list all the quadric generators having $d_2$ as a matrix of linear syzygies among them, however, they can be easily computed from the syzygies of $d_2^t$). One can check that it is the union of $X=\p^2 \times \p^2$ and the linear subspace defined by the colon ideal $(I(\gamma) : I_X) = (a,b,c,d,e,f,h,i)$. Note that this linear subspace in $\p^8$ corresponds to the unique linear subspace $W \subset S_1$ of minimal dimension so that $\gamma \in K_{3,1} (S_X, S_1, W)$, see \cite[Lemma 3.1]{AN10} for more details on the uniqueness of such a linear subspace $W$. 

If we choose the last column of $D_3$ as a different example, that is, we take a different syzygy
\[
\gamma^{\prime} = 2 \begin{pmatrix} -a \wedge b-e \wedge f+d \wedge h \\ -a \wedge c+f \wedge g+e \wedge h \\ -b \wedge c+f \wedge h \\ c \wedge d-b \wedge e+a \wedge f \\ c \wedge e-b \wedge g+a \wedge h \\ e \wedge g \\ -d \wedge g \\ d \wedge e \\ 0 \end{pmatrix}
\]
whose syzygy scheme $Syz(\gamma^{\prime})$ is defined by $8$ quadrics dropping the last quadric ($cf-bh+ai$). One can also check that $Syz(\gamma^{\prime})$ is the union of $X$ and $V(a,b,c,d,e,f,g,h)$, since $\gamma^{\prime}$ is a Koszul cycle in $K_{3,1} (S_X, S_1, W^{\prime})$ where $W^{\prime} \subset S_1$ is the unique linear subspace of minimal dimension for defining $\gamma^{\prime}$.
\end{enumerate}
\end{exmp}

\begin{rem}
Let $X \subset \p^r$ be a smooth del Pezzo variety of codimension $e \ge 3$. When $\binom{e+1}{2}-2 \ge r+1$, one can computationally check that the unique linear subspace $W \subset S_1$ of minimal dimension to define a general highest linear syzygy $\gamma \in K_{e-2,2} (I_X, S_1)$ becomes $W=S_1$, see also \cite[Proposition 5.3]{KMP24}. In the case, the syzygy scheme $Syz(\gamma)$ is the same as $X$ itself even though the ideal $I(\gamma)$ is strictly smaller than $I_X$. Indeed, the colon ideal $(I(\gamma) : I_X)$ is the irrelavent maximal ideal in $S$ consisted of all the linear forms, and thus the saturation of $I(\gamma)$ is $I_X$.
\end{rem}

\def\cprime{$'$} \def\cprime{$'$} \def\cprime{$'$} \def\cprime{$'$}
  \def\cprime{$'$} \def\cprime{$'$} \def\dbar{\leavevmode\hbox to
  0pt{\hskip.2ex \accent"16\hss}d} \def\cprime{$'$} \def\cprime{$'$}
  \def\polhk#1{\setbox0=\hbox{#1}{\ooalign{\hidewidth
  \lower1.5ex\hbox{`}\hidewidth\crcr\unhbox0}}} \def\cprime{$'$}
  \def\cprime{$'$} \def\cprime{$'$} \def\cprime{$'$}
  \def\polhk#1{\setbox0=\hbox{#1}{\ooalign{\hidewidth
  \lower1.5ex\hbox{`}\hidewidth\crcr\unhbox0}}} \def\cdprime{$''$}
  \def\cprime{$'$} \def\cprime{$'$} \def\cprime{$'$} \def\cprime{$'$}
\providecommand{\bysame}{\leavevmode\hbox to3em{\hrulefill}\thinspace}
\providecommand{\MR}{\relax\ifhmode\unskip\space\fi MR }
\providecommand{\MRhref}[2]{%
  \href{http://www.ams.org/mathscinet-getitem?mr=#1}{#2}
}
\providecommand{\href}[2]{#2}

\vspace{0.5cm}

\end{document}